\numberwithin{equation}{section}
\theoremstyle{plain}
\crefname{prop}{Proposition}{Propositions}
\newtheorem{prop}[equation]{Proposition}
\newtheorem{theorem}[equation]{Theorem}
\crefname{obs}{Observation}{Observations}
\crefname{cor}{Corollary}{Corollaries}
\newtheorem{cor}[equation]{Corollary}
\theoremstyle{definition}
\crefname{defn}{Definition}{Definitions}
\crefname{example}{Example}{Examples}
\newtheorem{example}[equation]{Example}
\theoremstyle{remark}
\crefname{remark}{Remark}{Remarks}
\crefname{claim}{Claim}{Claims}
\newtheorem{claim}[equation]{Claim}
\declaretheoremstyle[
spaceabove=\topsep, 
spacebelow=6pt,
headfont=\normalfont\itshape,
notefont=\normalfont, notebraces={(}{)},
bodyfont=\normalfont,
postheadspace=4pt,
qed=\mbox{\small$\boxtimes$}
]{claimproofstyle}
\declaretheorem[name={Proof of Claim}, style=claimproofstyle, unnumbered]{pf}
\newcommand{\R}{\mathbb{R}}
\newcommand{\N}{\mathbb{N}}
\newcommand{\Z}{\mathbb{Z}}
\newcommand{\set}[1]{\left\{ #1\right\}}
\newcommand{\cocycle}{\mathfrak{w}}
\newcommand{\dfn}[1]{\textbf{#1}}
\title{Oscillating and nonsummable Radon--Nikodym cocycles along the forward geodesic of measure-class-preserving transformations}
\author{Sasha Bell}
\address[Sasha Bell]{
Mathematics and Statistics,
McGill University, Montréal, 
QC, Canada
}
\email{sasha.bell@mail.mcgill.ca}
\author{Tasmin Chu}
\address[Tasmin Chu]{
Mathematics and Statistics,
McGill University, Montréal, 
QC, Canada
}
\email{tasmin.chu@mail.mcgill.ca}
\author{Owen Rodgers}
\address[Owen Rodgers]{
Mathematics and Statistics,
McGill University, Montréal, 
QC, Canada
}
\email{owen.rodgers@mail.mcgill.ca}
\thanks{
S.~Bell was partially supported by an NSERC Undergraduate Student Research Award, with supplemental funding from FRQNT. 
O.~Rodgers and T.~Chu were partially supported by McGill Science Undergraduate Research Awards. 
The authors were partially supported by Anush Tserunyan’s NSERC Discovery Grant RGPIN-2020-07120.
}
\keywords{Radon--Nikodym cocycle, countable-to-one Borel function, countable Borel equivalence relations, quasi-pmp,  measure class preserving, non-singular, random walk, forward geodesic, forward orbit, oscillation}
\subjclass[2020]{Primary 03E15, 37A40; Secondary 60G50, 37A20}
\date{\vspace{-5ex}}
\date{} 
\begin{document}

\begin{abstract}
    We consider the least-deletion map on the Cantor space, namely the map that changes the first $1$ in a binary sequence to $0,$ and construct product measures on $2^\N$ so that the corresponding Radon--Nikodym cocycles oscillate or converge to zero nonsummably along the forward geodesic of the map.  
    These examples answer two questions of Tserunyan and Tucker-Drob. 
    We analyze the oscillating example in terms of random walks on $\Z$, using the Chung--Fuchs theorem.
\end{abstract}
\maketitle

\section{Introduction}

This paper focuses on descriptive dynamics of countable Borel equivalence relations (\dfn{CBERs}) on a standard Borel space $X$.
Of particular interest are those equivalence relations which admit an acyclic graphing, called a treeing. 
Such CBERs are called \dfn{treeable.}
In the probability-measure-preserving (pmp) setting, Adams' dichotomy gives a complete characterization of which treeings are amenable: a pmp treeing is amenable if and only if almost every connected component has at most two ends.

This dichotomy fails in general in the measure-class-preserving (mcp) setting, but Tserunyan and Tucker-Drob proved the following generalized version of the Adams dichotomy. Their characterization of amenable treeings is in terms of the Radon--Nikodym cocycle associated to the equivalence relation and the measure: if $E$ is an mcp CBER on a standard probability space \((X,\mu)\), then \(E\) admits a unique Borel (up to null sets) \dfn{Radon--Nikodym cocycle} \(\cocycle : E \to \R^+\) which quantifies the failure of invariance of the measure $\mu$. We think of $\cocycle_y(x):=\cocycle(x,y)$ as the weight of $x$ relative to the weight of $y$.

Tserunyan and Tucker-Drob characterize the amenability of an mcp treeing via the behaviour of the corresponding Radon--Nikodym cocycle. Namely, they show in \cite{TTD} that a locally countable acyclic mcp graph $T$ is amenable if and only if almost every connected component $C$ has at most two \dfn{nonvanishing ends}. Here, an end $\eta \in \partial_T{C}$ is called nonvanishing if for some \(x \in C\), there is \(\varepsilon > 0\) such that for every neighbourhood of the end, there is a point \(y\) in that neighbourhood such that \(\cocycle_x(y) \geq \varepsilon\). 
Their analysis of the behaviour of the Radon--Nikodym cocycle along the ends relies on a close examination of the case when the equivalence relation is given by a function.

More precisely, Tserunyan and Tucker-Drob investigate the Radon--Nikodym cocycle in the setting of a probability space $(X,\mu)$ equipped with a Borel transformation $f: X \rightarrow X$, whose (orbit) equivalence relation $E_f$ is mcp. In particular, they consider the least-deletion map $f: 2^\N \rightarrow 2^\N$ on the Cantor space which changes the first $1$ in a sequence to $0$. They note that equipping $2^\N$ with $\text{Bernoulli}(p)$ power measures yields (depending on whether $\frac{p}{1-p}$ is bigger than 1) examples of the Radon--Nikodym cocycle either converging to $0$ along the forward geodesic summably or converging to infinity.

In this context, Tserunyan and Tucker-Drob ask the following two questions: is there an example of a probability space \((X,\mu)\) equipped with a Borel transformation \(f:X\to X\) with an mcp orbit equivalence relation $E_f$ such that the induced Radon--Nikodym cocycle oscillates along the \(f\)-forward geodesic, meaning it gets arbitrarily large and arbitrarily close to zero? 
Is there an example where the cocycle converges to zero along the forward geodesic, but the sum of the cocycle weights along the geodesic is infinite? 
We answer both questions in the affirmative, completing the picture above. 

The existence of a Radon--Nikodym cocycle which converges to $0$ nonsummably along the forward geodesic in particular reveals a sharp distinction between one-to-one and many-to-one mcp transformations. By a classical theorem of \cite[Prop 1.3.1]{aaronson}, if $f: X \rightarrow X$ is a \textbf{bijective} mcp transformation on a standard probability space $(X, \mu)$, the forward geodesic is summable a.e. if and only if the dynamical system is dissipative. As noted in \cite{TTD}, dissipativity is equivalent to the smoothness of the orbit equivalence relation $E_f$, which in turn is equivalent to the Radon--Nikodym cocycle along the forward geodesic converging to $0$ a.e. Thus, an example like 
\cref{nonsummable} is impossible for invertible transformations.
\subsection*{Acknowledgements}
We are grateful to Anush Tserunyan for her mentorship and helpful comments. We especially thank Louigi Addario-Berry for sketching the probabilistic proof that one of the cocycles oscillates as required. We thank Anush Tserunyan and Robin Tucker-Drob for suggesting the least-deletion map as a candidate to analyze. We also thank Nachi Avraham-Re'em for drawing our attention to Aaronson's result and the relevance of the nonsummable cocycle in classical ergodic theory.

\section{Preliminaries}

Let $(X, \mu)$ be a standard probability space and let $Y$ be a standard Borel space. Given a $\mu$-measurable function $f: X \rightarrow Y$, we denote the \dfn{pushforward} of $\mu$ under $f$ by $f_*\mu$ where $f_*\mu(A) = \mu(f^{-1}(A))$ for each Borel set $A\subseteq Y.$ 
For a fixed element $x\in X$, we call the path $(x, fx, f^2x, \cdots)$ the \textbf{forward geodesic} of $x$. 

We say $g:X \rightarrow X$ is \dfn{measure-class-preserving} (or non-singular) if $g_*\mu$ is equivalent to $\mu$, i.e. they are both absolutely continuous with respect to each other. 

Let $E$ be an equivalence relation. We say $E$ is a \dfn{countable Borel equivalence relation} if each equivalence class $[x]_E$ is countable and if the subset $E \subseteq X \times X$ is a Borel subset of $X \times X$. In this case, notice $E$ itself is a standard Borel space.

For a subset \(A\) of a standard space $X$ endowed with a countable Borel equivalence relation $E$, we denote the \dfn{saturation} of $A$ by
\([A]_{E} = \{x \in X: xEa \text{ for some } a \in A\}.\)

We say a measure $\mu$ defined on the Borel sets of $X$ is \dfn{$E$-quasi-invariant} if \([A]_{E}\) is $\mu$-null whenever \(A\) is $\mu$-null. Equivalently in this case, we say that the measured equivalence relation $E$ on $(X, \mu)$ is \dfn{measure-class-preserving (mcp)}.

By \cite[Section 8]{kechrismiller}, if $E$ is a CBER on a standard probability space \((X,\mu)\) and \(\mu\) is a \dfn{\(E\)-quasi-invariant} measure, then \(E\) admits a unique Borel (up to null sets) \dfn{Radon--Nikodym cocycle} \(\cocycle : E \to \R^+\) satisfies the \dfn{(tilted) mass transport principle}, meaning for any Borel partial bijection \(\gamma : X \rightharpoonup X\) and any \(f\in L^1(X,\mu),\)
\[
\int_{\text{dom}(\gamma)} f(x)d\mu(x) = \int_{\text{im}(\gamma)}f(\gamma(x)) \cocycle_x (\gamma(x)) d\mu(x),
\]
where $\cocycle_x (\gamma(x)) \coloneqq \cocycle(x, \gamma(x))$.

A more comprehensive reference for the dynamics of countable Borel equivalence relations can be found in \cite{kechrismiller}.

\section{The least-deletion map and the corresponding Radon--Nikodym cocycle}

Our setting is that of a countable Borel equivalence relation $E$ on the Cantor space $2^\N$ equipped with an $E$-quasi-invariant probability measure $\mu$. 

Let $X$ be the (co-countable) subset of $2^\N$ of sequences containing infinitely many $1$s.
Consider now the least-deletion map \(f: X \rightarrow X\) which flips the first $1$ in a sequence $x = (x_0, x_1, \cdots) \in 2^\N$ to a $0$. Note that $f$ is finite-to-one. 

We denote by $E_f$ the orbit equivalence relation of $f$. 
Note that $E_f$ coincides with $E_0$ on $X$, the equivalence relation of eventual equality, i.e. $xE_0 y$ iff $x_n = y_n$ for all large enough $n$.
Thus $E_f$ is generated by the involutions $b_n : X \rightarrow X$ which flip the $n$th bit. 
This implies that any product measure $\mu = \prod_{n=1}^ \infty m_n$ on $2^\N$ with non-trivial marginal measures $m_n$ is $E_f$-quasi-invariant since each of these involutions preserves the measure class of $\mu$ by definition. 

For any such $\mu,$ we compute the Radon--Nikodym cocycle \(\cocycle: E_f \rightarrow \R^+\) explicitly, and we derive a formula for $\cocycle_x(f^k(x))$ for $k \in \N$.

\begin{prop}\label{Radon--Nikodym thm}
Fix a non-trivial product measure $\mu = \prod_{n=1}^\infty m_n$ on $2^\N$, and let $\cocycle$ be the Radon--Nikodym cocycle associated to $E_f$ and $\mu$.
Then for each $k \ge 1$ and for a.e. $x \in X$, we have
\[
\cocycle_x(f^k(x)) = \prod_{i=0}^{k-1} \dfrac{m_{n_i}(0)}{m_{n_i}(1)},
\]
where $(n_i)_{i \geq 0}$ is the subsequence of indices $n_i$ for which $x_{n_i}=1$.
\end{prop}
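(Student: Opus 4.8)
The plan is to first compute the cocycle on generators and then chain along the forward geodesic. Since $E_\ramsey$ is generated by the bit-flip involutions $b_n$, and the cocycle satisfies the mass transport principle, I would begin by pinning down $\cocycle_x(b_n(x))$ for each $n$. Applying the tilted mass transport principle to the partial bijection $\gamma = b_n$ restricted to a cylinder set $[s]$ where $s$ specifies finitely many coordinates including the $n$th, and using that $b_n$ only alters the $n$th coordinate, the product structure of $\mu$ forces $\cocycle_x(b_n(x)) = \dfrac{m_n(x_n)}{m_n(1-x_n)}$ for a.e.\ $x$. Concretely, if $x_n = 1$ then flipping gives a point of relative weight $m_n(0)/m_n(1)$; if $x_n = 0$, the reciprocal. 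This is the standard Radon--Nikodym derivative computation for product measures under coordinate flips, and it is where the hypothesis of non-triviality of the $m_n$ is used (so that all these ratios are well-defined positive reals).

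Next I would use the cocycle identity $\cocycle_x(z) = \cocycle_x(y)\,\cocycle_y(z)$ for $x \mathrel{E_\ramsey} y \mathrel{E_\ramsey} z$, which holds a.e.\ and follows from the mass transport principle (or is part of the defining properties of $\cocycle$ in \cite{kechrismiller}). The point is to express $\ramsey^k$ as a composition of the generators $b_n$. By definition of the least-deletion map, $\ramsey(x)$ flips the coordinate $x_{n_0}$, where $n_0$ is the position of the first $1$ in $x$; that is, $\ramsey(x) = b_{n_0}(x)$. After this flip, the first $1$ of $\ramsey(x)$ is now in position $n_1$ (the position of the second $1$ of $x$, since positions $< n_1$ in $\ramsey(x)$ are all $0$), so $\ramsey^2(x) = b_{n_1}(b_{n_0}(x))$, and inductively $\ramsey^k(x) = b_{n_{k-1}} \circ \cdots \circ b_{n_0}(x)$, where $(n_i)_{i\ge 0}$ enumerates the positions of the $1$s of $x$ in increasing order.

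Finally I would assemble the pieces: applying the cocycle identity repeatedly along the chain $x, b_{n_0}(x), b_{n_1}b_{n_0}(x), \ldots, \ramsey^k(x)$, I get
\[
\cocycle_x(\ramsey^k(x)) = \prod_{i=0}^{k-1} \cocycle_{\ramsey^i(x)}\bigl(\ramsey^{i+1}(x)\bigr) = \prod_{i=0}^{k-1} \cocycle_{\ramsey^i(x)}\bigl(b_{n_i}(\ramsey^i(x))\bigr).
\]
Since the $n_i$th coordinate of $\ramsey^i(x)$ equals $1$ (it is the first $1$ of $\ramsey^i(x)$), the generator computation gives $\cocycle_{\ramsey^i(x)}(b_{n_i}(\ramsey^i(x))) = m_{n_i}(0)/m_{n_i}(1)$, and substituting yields the claimed formula.

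The main obstacle is the first step: correctly extracting $\cocycle_x(b_n(x)) = m_n(x_n)/m_n(1-x_n)$ from the mass transport principle and justifying that this determines the cocycle a.e.\ (rather than just on average over cylinders). One must test the mass transport identity against indicator functions of sufficiently many cylinder sets and invoke uniqueness of the Radon--Nikodym cocycle to conclude the pointwise-a.e.\ formula; I'd also need to be slightly careful that all the countably many a.e.\ statements (one per coordinate $n$, plus the cocycle identity) can be intersected into a single conull set on which the final identity holds for all $k$ simultaneously.
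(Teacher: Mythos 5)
Your proposal follows essentially the same route as the paper: compute the cocycle on the bit-flip generators $b_n$ from the product structure of $\mu$ on cylinder sets, then chain along $\ramsey^k = b_{n_{k-1}}\circ\cdots\circ b_{n_0}$ using the cocycle identity and the fact that each $\ramsey^i(x)$ has its first $1$ at position $n_i$. The only slip is that your displayed generator formula $\cocycle_x(b_n(x)) = m_n(x_n)/m_n(1-x_n)$ is inverted relative to the concrete values you state immediately after (the correct value, as in the paper, is $m_n(1-x_n)/m_n(x_n) = (1-m_n(x_n))/m_n(x_n)$); since your final assembly uses the correct ratio $m_{n_i}(0)/m_{n_i}(1)$, the argument still goes through.
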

\begin{proof}
    It is enough to show that for a bit-flip involution \(b_n\), we have that 
    \[
    \cocycle_x(b_n (x)) = \frac{1-m_n(x_n)}{m_n(x_n)}.
    \]
    Then, using the cocycle property and that \(f\) always flips a one, we get the desired formula.
    It suffices to show this formula holds on cylinder sets.
    
    Consider a cylinder set \(C_n = [a_0 \cdots a_n]\) with bits \(a_i \in \set{0,1}\) specified up to the nth position. 
    We have \(\mu(C_n) = m_0(a_0) \cdot \ldots \cdot m_n(a_n)\). 
    Then \(b_n(C_n) = [a_0\ldots (1-a_n)]\) so \(\mu(b_n(C_n)) = m_0(a_0) \cdot \ldots \cdot m_{n-1}(a_n)\cdot(1-m_n(a_n))\).
    Computing the Radon--Nikodym derivative of \(b_n\) on this cylinder set, we obtain 
    \[\frac{d(b_{n \ast} \mu)}{d\mu} (C_n) = \frac{m_0(a_0)\cdot \ldots \cdot (1-m_n(a_n))}{m_0 \cdot \cdot \ldots \cdot m_n(a_n)} = \frac{1-m_n(a_n)}{m_n(a_n)},
    \]
    as desired.
\end{proof}

We also record the following fact for which we could not find any reference in the literature.

\begin{prop}
    For any product measure $\mu = \prod_{n=1}^ \infty m_n$ on $2^\N$ with non-trivial marginal measures $m_n,$ the equivalence relation $E_0$ (thus also the least-deletion map $f$) is ergodic with respect to $\mu$.
\end{prop}

\begin{proof}[Proof \emph{(Tserunyan)}]
    Let $A$ be an $E_0$-invariant positively-measured set and let $\varepsilon>0$. By the regularity of $\mu$, there is a cylinder set $[w]$, where $w \in 2^{n}$, such that $\frac{\mu(A \cap [w])}{\mu([w])} > 1 - \varepsilon$. Then it follows from the definition of the product measure and $E_0$-invariance of $A$ that  $\frac{\mu(A \cap [w])}{\mu([w])} = \frac{\mu(A \cap [u])}{\mu([u])}$ for any other word $u \in 2^n$. This implies $\mu(A) > 1 - \varepsilon.$
\end{proof}

\section{Examples of oscillating cocycles}

In \cite{TTD}, Tserunyan and Tucker-Drob ask if it is possible to have a measure-class-preserving function $g: (Y, \nu) \rightarrow (Y, \nu)$ on a standard probability space $(Y, \nu)$ such that the Radon--Nikodym cocycle with respect to $\mu$, defined on the orbit equivalence relation induced by $g$, \textbf{oscillates} along the \(g\)-forward geodesic: that is \(\displaystyle \limsup_{k\rightarrow \infty}\cocycle_y(g^{k}(y)) = \infty\)
and 
\(\displaystyle\liminf_{k\rightarrow \infty}\cocycle_y (g^{k}(y)) = 0
\) for a.e.\ $y \in Y$.

Letting $f : X \rightarrow X$ denote the least-deletion map defined above, we provide examples of product measures on $2^\N$ so that their Radon--Nikodym cocycles exhibit the desired oscillatory behaviour. Recall from above that $X$ is the subset of $2^\N$ of sequences with infinitely many 1s.

\begin{example}[Oscillating cocycle]\label{oscillating period 3}
Consider the product measure $\mu:=\prod_{n=0}^\infty m_n$ defined as follows.
If $n \equiv 0$ mod $3$, set 
\[
m_n(0):= 1/3\text{ and } m_n(1):=2/3,
\]
so $m_n$ is biased towards $1$. If $n \not \equiv 0$ mod $3$, set
\[
m_n(0):= 2/3\text{ and }m_n(1):=1/3,
\]
so $m_n$ is biased towards $0$.
\end{example}

\begin{theorem}\label{oscillating theorem}
With the measure $\mu$ as above, the Radon--Nikodym cocycle $\cocycle$ oscillates along the forward geodesic of the least-deletion map $f$ for a.e.\ $x \in X.$
\end{theorem}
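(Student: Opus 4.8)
The plan is to translate the statement into a question about random walks on $\Z$, then invoke the Chung--Fuchs recurrence theorem. By \cref{Radon--Nikodym thm}, for a.e.\ $x \in X$ and each $k \ge 1$ we have $\cocycle_x(\ramsey^k(x)) = \prod_{i=0}^{k-1} \frac{m_{n_i}(0)}{m_{n_i}(1)}$, where $(n_i)_{i\ge 0}$ enumerates the positions of the $1$s in $x$. Taking logarithms, $\log \cocycle_x(\ramsey^k(x)) = \sum_{i=0}^{k-1} \log\frac{m_{n_i}(0)}{m_{n_i}(1)}$. With the given measure, $\log\frac{m_n(0)}{m_n(1)}$ equals $\log(1/2) = -\log 2$ when $n \equiv 0 \pmod 3$ and $\log 2$ when $n \not\equiv 0 \pmod 3$. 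So the log-cocycle, after $k$ steps along the forward geodesic, is $(\log 2)\cdot S_k$, where $S_k = \#\{i < k : n_i \not\equiv 0\} - \#\{i < k : n_i \equiv 0\}$. The theorem — $\limsup \cocycle = \infty$ and $\liminf\cocycle = 0$ — is then equivalent to the statement that $S_k$ has $\limsup S_k = +\infty$ and $\liminf S_k = -\infty$ a.s.

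The key step is therefore to identify the law of the increments $\xi_i := \mathbbm{1}[n_i \not\equiv 0] - \mathbbm{1}[n_i \equiv 0] \in \{+1,-1\}$ and show $S_k = \sum_{i<k}\xi_i$ is a (centered, or at least Chung--Fuchs-recurrent) random walk under $\mu$. Here I would argue as follows. Condition on $x \in X$ drawn from $\mu = \prod m_n$. The positions of $1$s form a random subset of $\N$; what matters is only the residues mod $3$ of the first $k$ such positions. I would show that the residue-mod-$3$ sequence $(n_i \bmod 3)_{i\ge 0}$, read off in increasing order of $i$, is itself an i.i.d.\ (or at least stationary ergodic) sequence with an explicit distribution, by a direct computation: the probability that the next $1$ after a given position lands in residue class $r$ can be computed from the $m_n$, and by the $3$-periodicity of the construction and independence of the coordinates, this yields i.i.d.\ increments. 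Granting this, $\xi_i$ is i.i.d.\ taking values $\pm 1$. The remaining point is that $S_k$ oscillates to $\pm\infty$: if $\E[\xi_i] = 0$ this is immediate from Chung--Fuchs (a centered one-dimensional random walk with finite first moment is recurrent, hence $\limsup = +\infty$, $\liminf = -\infty$); if the mean is nonzero the walk would be transient and the cocycle would not oscillate, so I would want to check the mean vanishes — I expect the symmetry of the construction (two $0$-biased coordinates for each $1$-biased one, with the biases $2/3$ vs.\ $1/3$ reciprocal) to force $\E[\xi_i]=0$, but this is exactly the computation to do carefully.

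I would carry out the steps in this order: (1) reduce to the log-cocycle via \cref{Radon--Nikodym thm} and record that it equals $(\log 2)S_k$; (2) set up the probability space, conditioning on $x \sim \mu$, and define the increment process $\xi_i$; (3) prove the increments are i.i.d.\ with values in $\{\pm1\}$ by computing the law of ``residue class of the next $1$'' and using independence plus $3$-periodicity; (4) compute $\E[\xi_i]$ and verify it is $0$; (5) apply Chung--Fuchs to conclude $\limsup S_k = +\infty$ and $\liminf S_k = -\infty$ a.s., hence $\limsup \cocycle_x(\ramsey^k x) = \infty$ and $\liminf \cocycle_x(\ramsey^k x) = 0$ a.s.

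The main obstacle I anticipate is step (3): making precise and rigorous the claim that the residues mod $3$ of the successive $1$-positions of a $\mu$-random sequence form an i.i.d.\ sequence, and pinning down that distribution. One has to be careful that conditioning on ``there are infinitely many $1$s'' (i.e.\ restricting to $X$) does not disturb the independence, and that the ``gap'' to the next $1$ interacts correctly with the mod-$3$ structure — a naive guess that each new $1$-position is uniform mod $3$ is false because the marginals $m_n$ themselves depend on $n \bmod 3$. Once the correct increment law is in hand, verifying the zero-mean condition (step 4) is a short calculation, and step (5) is a black-box appeal to Chung--Fuchs.
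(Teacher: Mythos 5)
Your reduction to a walk on $\Z$ and the appeal to Chung--Fuchs are the right instincts, and they match the paper's strategy in spirit, but step (3) of your plan rests on a false claim, and you have correctly identified it as the load-bearing step. The residues $(n_i \bmod 3)_{i\ge 0}$ of the successive $1$-positions are \emph{not} i.i.d.: the distribution of $n_{i+1} \bmod 3$ given $n_i$ depends on $n_i \bmod 3$, because the marginals $m_n$ are $3$-periodic and the gap to the next $1$ is measured from the current position. Concretely, $\mathbb{P}(n_{i+1} \equiv 0 \mid n_i \equiv 2) \ge m_{\equiv 0}(1) = 2/3$ already from a gap of $1$, whereas $\mathbb{P}(n_{i+1} \equiv 0 \mid n_i \equiv 0)$ requires a gap divisible by $3$ and starts at $(2/3)^3 = 8/27$; these conditional laws are genuinely different. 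So $(n_i \bmod 3)$ is a time-homogeneous Markov chain, your increments $\xi_i$ are an additive functional of that chain rather than i.i.d., and the classical Chung--Fuchs theorem does not apply to $S_k$ as you have set it up. (Your route could in principle be repaired by a regeneration argument --- decompose $S_k$ into i.i.d.\ excursion sums between successive returns of the chain to a fixed residue class, check the excursion sums are centered with finite mean, apply Chung--Fuchs to the subsampled walk, and note that $\limsup = +\infty$, $\liminf = -\infty$ along a subsequence suffices --- but that is substantial extra work you have not supplied.)

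The paper sidesteps this entirely with a reindexing trick you are missing: instead of tracking the log-cocycle as a function of the number of $1$s seen so far, define a ``lazy'' cocycle $\tilde C_k$ indexed by \emph{position} $k \in \N$, which multiplies by $m_k(0)/m_k(1)$ when $x_k = 1$ and stays put when $x_k = 0$. This lazy sequence visits exactly the same values as your $C_k$ (with repetitions), so it oscillates iff $C_k$ does. Its logarithm, sampled along multiples of $3$, has increments $Z_k = \mathbbm{1}_{\{x_{3k-2}=1\}} + \mathbbm{1}_{\{x_{3k-1}=1\}} - \mathbbm{1}_{\{x_{3k}=1\}}$, which are honestly i.i.d.\ because they depend on disjoint blocks of coordinates of a product measure, and have mean $\tfrac13 + \tfrac13 - \tfrac23 = 0$. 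Chung--Fuchs then applies directly. You should either adopt this reindexing or carry out the Markov-chain regeneration argument in full; as written, the proof does not go through.
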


\begin{proof}

Fix $x \in X$ and for each $k\geq 1,$ let $C_k(x) : = \cocycle_x(f^k(x))$. By our computation of the Radon--Nikodym cocycle,
\[
C_k(x) = \prod_{i=1}^k \frac{m_{n_i}(0)}{m_{n_i}(1)}
\]
where $(n_i)_{i \geq 1}$ is the subsequence of $\N$ for which $x_{n_i}=1$. 
Call $C(x):=(C_k(x))_{k \geq 0}$ the cocycle sequence at $x.$ 

We now define a "lazy" version of $C(x),$ denoted $\Tilde{C}(x)$. We set $\Tilde{C}_0(x):=1$, and define $\Tilde{C}(x)$ inductively by
\[
\Tilde{C}_{k+1}(x) := \begin{cases}
    \frac{m_k(0)}{m_k(1)}\Tilde{C}_k(x) &  \text{if 
    } x_k =1 \\
    \Tilde{C}_k(x) & \text{if } x_k = 0
\end{cases}
\]

Notice that $\Tilde{C}_k$ jumps precisely when $k \in (n_i)_{i \geq 1}$, and otherwise it lingers at the same value. We will prove that $\tilde{C}(x)$ oscillates and thus conclude that the cocycle sequence $C(x)$ oscillates.

For each $k \in \N$, observe that $\tilde{C}_k(x)$ is a power of 2. We will write $\tilde{C}_k$ instead of $\tilde{C}_k(x)$, since $x$ is fixed.
Let $L_k := \log_2 \tilde{C}_k$. Then $L_{k} \in \mathbb{Z}$ for all $k$. Now set $X_0 := 0$ and let 
\[
Z_k := L_{3k} - L_{3(k-1)}
\]
for $k > 0$. Notice that $Z_k$ for $k \geq 1$ takes values in $\{-1,0,1,2\}$ deterministically according to the value of the sequence $x$ on the indices $3k-2, 3k-1,$ and $3k$.

We now treat $x$ and $Z_k(x)$ as random variables, where $x$ is distributed according to $\mu$. 
Notice that
\[
Z_k(x) =\mathbbm{1}_{\{x_{3k-2}=1\}} + \mathbbm{1}_{\{x_{3k-1}=1\}} - \mathbbm{1}_{\{x_{3k}=1\}}.
\]
Since the product measure was periodically defined for triples, the $Z_k$ are independent identically distributed random variables.

By the linearity of expectation,
\begin{align*}
    \mathbb{E}[Z_k] &= \mathbb{P}(x_{3k-2}=1) + \mathbb{P}(x_{3k-1}=1) - \mathbb{P}(x_{3k}=1)
    = \frac{1}{3} + \frac{1}{3} - \frac{2}{3} = 0.
\end{align*}
Since $Z_k = L_{3k} - L_{3(k-1)}$, we can write 
\(
L_{3k} = \sum_{i=1}^k Z_i,
\)
so we have that $L_{3k}$ is a random walk on \(\Z\) with zero bias. A theorem by Chung and Fuchs \cite{chung} says that under mild conditions which are satisfied above, random walks on $\Z$ with zero bias oscillate almost surely. 

Thus, for almost every sequence $x\in X,$
\[
\limsup_{k\to\infty} L_{3k}(x) = \infty \quad \text{and} \quad \liminf_{k\to\infty} L_{3k}(x) = -\infty,
\]
so exponentiating we find that $\mu$-a.e.,
\[
\limsup_{k\to\infty} \Tilde{C}_k = \infty \quad 
\text{and} \quad 
\liminf_{k\to\infty} \Tilde{C}_k = 0.
\]
This gives us that the lazy cocycle sequence $\Tilde{C}(x)$ oscillates almost surely. Thus $C(x)$ must also oscillate.
\end{proof}

\example[An infinite family of oscillating cocycles]\label{Oscillation with period m}\label{generaloscillating}
We can generalize the above example to obtain a family of periodic measures, all of which have oscillating cocycles.

Fix $j>2$ and consider the infinite product measure $\mu:=\prod_{n=0}^\infty m_n$ defined as follows.
If $n \equiv 0$ mod $j$, set 
\[
m_n(0):= \frac{1}{j}\text{ and }m_n(1):=\frac{j-1}{j},
\]
and if $n \not \equiv 0$ mod $j$, set
\[
m_n(0):= \frac{j-1}{j}\text{ and }m_n(1):=\frac{1}{j}.
\]
Let $\cocycle$ be the cocycle of $\mu$ with respect to the least-deletion map $f:X\rightarrow X.$ The proof of example \labelcref{oscillating theorem} can be straightforwardly generalized to give the following corollary.  
\begin{cor}
With the measure $\mu$ as above, the Radon--Nikodym cocycle $\cocycle$ oscillates along the forward geodesic of the least-deletion map $f$ for a.e.\ $x \in X.$
\end{cor}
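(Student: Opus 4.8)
The plan is to rerun the proof of \cref{oscillating theorem} almost verbatim, with the base $2$ replaced by the base $j-1$. First I would invoke \cref{Radon--Nikodym thm} to express the cocycle sequence along the $\ramsey$-forward geodesic as
\[
C_k(x) := \cocycle_x(\ramsey^k(x)) = \prod_{i=1}^{k}\frac{m_{n_i}(0)}{m_{n_i}(1)},
\]
where $(n_i)_{i\ge 1}$ enumerates the indices with $x_{n_i}=1$, and define the lazy cocycle $\tilde{C}(x)$ by the same recursion as before ($\tilde{C}_0:=1$, multiplying by $m_k(0)/m_k(1)$ when $x_k=1$ and lingering when $x_k=0$). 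The one new observation is arithmetic: with the periodic $\mu$ defined above, the ratio $m_n(0)/m_n(1)$ equals $1/(j-1)$ when $n\equiv 0 \pmod j$ and equals $j-1$ otherwise. Hence $L_k := \log_{j-1}\tilde{C}_k$ is an integer for every $k$, and the increment $L_{k+1}-L_k$ equals $-1$ when $x_k=1$ and $j\mid k$, equals $+1$ when $x_k=1$ and $j\nmid k$, and equals $0$ when $x_k=0$.

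Next I would block the walk $j$ steps at a time. Setting $Z_k := L_{jk}-L_{j(k-1)}$ for $k\ge 1$, the variable $Z_k$ is, exactly as for $j=3$, a sum of $j-1$ indicators $\mathbbm{1}_{\{x_n=1\}}$ over indices $n\not\equiv 0\pmod j$ minus a single indicator $\mathbbm{1}_{\{x_n=1\}}$ over an index $n\equiv 0 \pmod j$; because $\mu$ is a product measure of period $j$, the $(Z_k)_{k\ge 1}$ are i.i.d. By linearity of expectation,
\[
\mathbb{E}[Z_k] = (j-1)\cdot\frac{1}{j} - \frac{j-1}{j} = 0,
\]
since each index $n\not\equiv 0\pmod j$ carries a $1$ with probability $1/j$ while the index $n\equiv 0\pmod j$ carries a $1$ with probability $(j-1)/j$. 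Thus $L_{jk}=\sum_{i=1}^{k} Z_i$ is a mean-zero random walk on $\Z$ with bounded, non-degenerate increments.

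By the Chung--Fuchs theorem \cite{chung}, such a walk oscillates: $\limsup_{k} L_{jk}=\infty$ and $\liminf_{k} L_{jk}=-\infty$ almost surely. This is where the hypothesis $j>2$ is used: the base $j-1$ exceeds $1$, so $t\mapsto(j-1)^t$ is strictly increasing on $\Z$ with supremum $\infty$ and infimum $0$, and exponentiating gives $\limsup_{k}\tilde{C}_k=\infty$ and $\liminf_{k}\tilde{C}_k=0$ a.s. Since $\tilde{C}(x)$ runs through exactly the same values as the genuine cocycle sequence $C(x)$ — it only lingers between consecutive jumps — $C(x)$ oscillates for a.e.\ $x\in X$, as claimed.

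I do not expect a genuine obstacle here; the content is bookkeeping on the proof of \cref{oscillating theorem}. The two points deserving a sentence are precisely the ones flagged above: that the hypothesis $j>2$ is exactly what makes the rescaling by $\log_{j-1}$ legitimate — for $j=2$ both ratios collapse to $1$ and the cocycle is identically constant, so there is nothing to oscillate — and that the ``mild conditions'' in Chung--Fuchs hold because i.i.d. mean-zero increments supported on the finite set $\{-1,0,\dots,j-1\}$ obey the strong law, producing a recurrent walk whose increments of both signs promote recurrence to genuine oscillation.
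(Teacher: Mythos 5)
Your proof is correct and is exactly the generalization the paper has in mind: the paper itself only remarks that the proof of \cref{oscillating theorem} ``can be straightforwardly generalized,'' and your argument carries out that generalization faithfully (base $j-1$ logarithm, period-$j$ blocking, mean-zero i.i.d.\ increments, Chung--Fuchs). The two points you flag — that $j>2$ is what makes $j-1>1$ so exponentiation recovers oscillation, and that the bounded non-degenerate increments satisfy the hypotheses of Chung--Fuchs — are accurate and worth the sentence you give them.
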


\section{Example of cocycle vanishing nonsummably along the forward geodesic}

In \cite{TTD}, Tserunyan and Tucker-Drob ask if it is possible to have a measure-class-preserving function $g: (Y, \nu) \rightarrow (Y, \nu)$ on a standard probability space $(Y, \nu)$ such that the Radon--Nikodym cocycle with respect to $\nu$, defined on the orbit equivalence relation induced by $g$, \textbf{vanishes nonsummably} along the forward geodesic. That is,
\[
\lim_{k\rightarrow\infty}\cocycle_y(g^k(y)) = 0 \quad \text{and} \quad
\sum_{k\in\N} \cocycle_y(g^k(y)) = \infty \quad
\text{ for $\nu$-a.e. } y\in Y.\]

We provide an example of a cocycle approaching zero nonsummably, on the space $X \subseteq 2^\N$ of sequences with infinitely many 1s, with the least-deletion map $f: X \rightarrow X$.

\example{(Cocycle going to zero nonsummably).}\label{nonsummable}
Let $N:=(n_k)_{k\geq 1}$ be a (sparse) infinite subsequence of $\N$ defined as follows: set $n_0:=0,$ and for $k \geq 1$, set
\[
\quad n_{k+1} := n_k + 2^{k\sum_{i\leq k} 
i}.
\]
We construct a product measure $\mu = \prod_{n=1}^\infty m_n$ as follows. For $n_k\in N,$ set
\[
m_{n_k} (0) := \frac{1}{2^k+1} \text{ and } m_{n_k}(1) = 
\frac{2^k}{2^k+1},
\]
and otherwise set
\[
m_n(0)\coloneqq m_n(1)\coloneqq \frac{1}{2}.
\]
Then 
\[
\frac{m_{n}(0)}{m_{n}(1)} = 
\begin{cases}
\frac{1}{2^k} & \text{ if } n = n_k \in N
\\
1 & \text{ otherwise.}
\end{cases}
\]
Let $\mathfrak{w}$ be the cocycle of $\mu$ with respect to the least-deletion map $f:X\rightarrow X.$
\begin{theorem}
    The cocycle $\cocycle$ vanishes nonsummably along the forward geodesic for almost every $x \in X.$
\end{theorem}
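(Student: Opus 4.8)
The plan is to work directly with the explicit formula of \cref{Radon--Nikodym thm}. Fix $x\in X$, write $p_0<p_1<p_2<\cdots$ for the positions of the $1$s of $x$, and put $c_k:=\cocycle_x(\ramsey^k(x))=\prod_{i=0}^{k-1}r_{p_i}$, where $r_n:=m_n(0)/m_n(1)$ equals $2^{-j}$ when $n=n_j\in N$ and equals $1$ otherwise. Thus $(c_k)_k$ is nonincreasing: it stays constant along every block of steps lying strictly between two successive deletions of $1$s located at points of $N$, and it drops by the factor $2^{-j}$ exactly at the step that deletes the $1$ at $n_j$.

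I would first prove $c_k\to 0$ almost surely. Since $\sum_j m_{n_j}(0)=\sum_j\tfrac1{2^j+1}<\infty$, the convergence half of Borel--Cantelli gives, for $\mu$-a.e.\ $x$, an index $J_0=J_0(x)$ with $x_{n_j}=1$ for all $j\ge J_0$; that is, all but finitely many points of $N$ carry a $1$ of $x$. As $k\to\infty$ the product defining $c_k$ therefore eventually absorbs the factor $2^{-j}$ for every $j\ge J_0$, and since $\prod_{j\ge J_0}2^{-j}=0$ while $c_k>0$, we conclude $c_k\to 0$.

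The substance of the argument is nonsummability. Fix $x$ with $x_{n_j}=1$ for $j\ge J_0$, let $k_j$ be the step at which the $1$ at $n_j$ gets deleted, and let $a_j$ be the common value of $c_k$ for $k_j<k\le k_{j+1}$. A short bookkeeping with the formula shows $a_j=c_{k_j}2^{-j}$ and $k_{j+1}-k_j=1+S_j$, where $S_j:=\#\{\,n:\ n_j<n<n_{j+1},\ x_n=1\,\}$, so $\sum_k c_k\ge\sum_{j\ge J_0}a_jS_j$. For the two factors: after discarding its trivial factors, $c_{k_j}$ is a product of distinct powers $2^{-j''}$ with $j''<j$, hence $c_{k_j}\ge 2^{-j(j-1)/2}$ and $a_j\ge 2^{-j(j+1)/2}$; and $S_j$ is $\mathrm{Binom}(g_j-1,\tfrac12)$-distributed with $g_j:=n_{j+1}-n_j=2^{\,j^2(j+1)/2}$, so by a concentration inequality (Chebyshev already suffices, since $\sum_j 1/g_j<\infty$) together with Borel--Cantelli we get $S_j\ge g_j/8$ for all large $j$, almost surely. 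Combining, for all large $j$, $a_jS_j\ge\tfrac18\,2^{\,j^2(j+1)/2-j(j+1)/2}=\tfrac18\,2^{\,j(j-1)(j+1)/2}\to\infty$, so $\sum_k c_k=\infty$ almost surely.

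The one delicate point is the block bookkeeping, and within it the treatment of the finitely many unoccupied points of $N$ below $J_0$; but those merely delete factors $2^{-i}\le 1$ from $c_{k_j}$, hence only enlarge $a_j$, so they cause no trouble. Everything else reduces to two applications of Borel--Cantelli and a crude lower bound on $a_j$, and the super-exponential growth of the gaps $g_j$ built into $N$ leaves so much room that no sharp estimates are needed.
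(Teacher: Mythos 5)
Your argument is correct and follows essentially the same route as the paper: Borel--Cantelli on $\sum_j m_{n_j}(0)<\infty$ for the vanishing, then a block decomposition along consecutive points of $N$ with the lower bound $2^{-j(j+1)/2}$ for the cocycle on block $j$, and a second Borel--Cantelli argument guaranteeing enough $1$s per block so that each block contributes a non-vanishing amount to the sum. The only cosmetic differences are that you certify the count of $1$s via Chebyshev at the generous threshold $g_j/8$ where the paper uses a direct binomial-tail estimate at the smaller threshold $2^{j(j+1)/2}$, and your off-by-one bookkeeping around $c_{k_j}$ (whether the factor $2^{-j}$ is already included) lands on the same correct bound for $a_j$ either way.
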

\begin{proof}
Notice that the set
\(
\{x\in X:x_{n_k} = 0\text{ for infinitely many }k\}
\)
is null by Borel--Cantelli, so the set on which the cocycle vanishes along the forward geodesic is conull.

Let
\(
p_k:=2^{\sum_{i\leq k} i}
\)
for all $k\geq 1.$ Then $n_{k+1} - n_k = p_k^k$ for all $k\geq 1.$ 
Let
\(
Z_k:=\{x\in X:\text{the number of 1s in $x$ between indices }n_k\text{ and }n_{k+1}\text{ is }\leq p_k\},
\) 
and let 
\(
Z \displaystyle\coloneqq \{x\in X:x\text{ is in infinitely many }Z_k\}.
\)

Now we require the following:

\begin{claim}\label{E claim}
\(\displaystyle
\{x\in X:x\text{ has a summable cocycle along the forward geodesic}\} \subseteq Z.
\)
\end{claim}
\begin{pf}
    For a given $x \in X$ and $k \in \N,$ suppose that the number of $1$s between $n_k$ and $n_{k+1}$ is $>p_k.$ Let $i:=\sup\{j<n_{k+1}:x_j=1\}.$ Then 

\[
\cocycle_x(F^i(x)) = \prod_
{\substack{j<k\\ x_{n_j}=1}}\frac{1}{2^j} \geq \prod_{j<k}\frac{1}{2^j} = 2^{-\sum_{j < k}j},
\]
so 
\[
\sum_{\substack{n_k<i<n_{k+1}\\x_i =1}}
\cocycle_x(F^i(x)) \geq p_k 2^{-\sum_{j < k}j} = 1.
\]
Hence if $x$ is summable, it must be in all but finitely many $Z_k$, since every membership in a $Z_k^c$ contributes at least $1$ to the sum.
\end{pf}

We show that $Z$ is null, and  conclude that the cocycle is summable only on a null set. 

For a fixed $k$,
\begin{align*}
\mu(Z_k) &= \frac{1}{2^{p_k^k}}\sum_{i=0}^{p_k}{{p_k^k} \choose i}\leq \frac{1}{2^{{p_k^k}}} \sum_{i=1}^{p_k} \frac{p_k^{ik}}{i!} 
\leq \frac{1}{2^{p_k^k}} p_k \frac{p_k^{kp_k}}{p_k!} \leq \frac{1}{2^{p_k^k}}p_k^{kp_k}\\
& = \frac{1}{2^{p_k^k}}2^{kp_k\sum_{i\leq k}i } = 2^{kp_k\sum_{i\leq k}i - p_k^k}\leq 2^{-k}
\end{align*}
for all but finitely many $k.$
Hence,
\(
\mu(Z_k) \leq \dfrac{1}{2^k}
\)
for all sufficiently large $k$, so $\sum_{k\in\N}\mu(Z_k) < \infty.$ 

The events $(Z_k)_{k\geq 1}$ are independent since they concern disjoint sets of indices, so \(\mu(Z) = 0\) by Borel--Cantelli. 
\end{proof}

\printbibliography

\end{document}